\newtheorem{theorem}{Theorem}
\newtheorem{lemma}[theorem]{Lemma}
\newenvironment{proof}[1][Proof]{\noindent\textbf{#1.} }{\ \rule{0.5em}{0.5em}}
\begin{document}

\title{On homoclinic solutions for a second order difference equation with $%
p-$Laplacian}
\author{Robert Stegli\'{n}ski}
\maketitle

\begin{abstract}
In this paper, we obtain conditions under which the difference equation 
\begin{equation*}
-\Delta \left( a(k)\phi _{p}(\Delta u(k-1))\right) +b(k)\phi
_{p}(u(k))=\lambda f(k,u(k)),\quad k\in 
\mathbb{Z}
,
\end{equation*}%
has infinitely many homoclinic solutions. A variant of the fountain theorem
is utilized in the proof of our theorem. It improves the results in [L.
Kong, Homoclinic solutions for a second order difference equation with $p-$%
Laplacian, \textit{Appl. Math. Comput}., \textbf{247} (2014), 1113--1121],
where the set of conditions imposed on nonlinearity is inconsistent.
\end{abstract}

\textbf{Math Subject Classifications}: 39A10, 47J30, 35B38

\textbf{Key Words}: Difference equations; discrete $p-$Laplacian;
variational methods; infinitely many solutions.

\section{Introduction}

\bigskip In the present paper we deal with the following nonlinear
second-order difference equation: 
\begin{equation}
\left\{ 
\begin{array}{ll}
-\Delta \left( a(k)\phi _{p}(\Delta u(k-1))\right) +b(k)\phi
_{p}(u(k))=\lambda f(k,u(k)) & \mbox{for
all $k\in\mathbb{Z}$} \\ 
u(k)\rightarrow 0 & \mbox{as $|k|\to \infty$}.%
\end{array}%
\right.  \label{eq}
\end{equation}%
Here $p>1$ is a real number, $\lambda $ is a positive real parametr, $\phi
_{p}(t)=|t|^{p-2}t$ for all $t\in {\mathbb{R}}$, $a,b:{\mathbb{Z}}%
\rightarrow \mathbb{(}0,+\infty )$, while $f:{\mathbb{Z}}\times {\mathbb{R}}%
\rightarrow {\mathbb{R}}$ is a continuous function. Moreover, the forward
difference operator is defined as $\Delta u(k-1)=u(k)-u(k-1)$. We say that a
solution $u=\{u(k)\}$ of (\ref{eq})\ is homoclinic if $\lim_{\left\vert
k\right\vert \rightarrow \infty }u(k)=0.$

\bigskip In this paper, similary to \cite{K}, our goal is to apply the
variational method and a variant of the fountain theorem to find a sequence
of homoclinic solutions for the problem (\ref{eq}). Our theorem improves the
results in \cite{K}, where the set of conditions imposed on nonlinearity is
inconsistent. We not only show that one of the assumptions is in fact
superfluous, but also that others can be relaxed. The problem (\ref{eq}) has
been studied recently in several papers. Infinitely many solutions were
obtained in \cite{SM} by employing Nehari manifold methods, in \cite{St} by
use of the Ricceri's theorem (see \cite{BMB}, \cite{R}), and in \cite{St1}
directly applying the variational method.

\bigskip We assume that potential $b(k)$ and the nonlinearity $f(k,t)$
satisfies the following conditions:

\begin{itemize}
\item[$(B)$] $b(k)\geq b_{0}>0$ for all $k\in 
\mathbb{Z}
$, $b(k)\rightarrow +\infty $ as $\left\vert k\right\vert \rightarrow
+\infty ;$

\item[$(H_{1})$] $\displaystyle f(k,-t)=-f(k,t)$ for all $k\in 
\mathbb{Z}
$ and $t\in 
\mathbb{R}
;$

\item[$(H_{2})$] $\displaystyle$there exist $d>0$ and $q>p$ such that $%
~\left\vert F(k,t)\right\vert \leq d\left( \left\vert t\right\vert
^{p}+\left\vert t^{q}\right\vert \right) $ for all $k\in 
\mathbb{Z}
$ and $t\in 
\mathbb{R}
;$

\item[$(H_{3})$] $\displaystyle\lim_{t\rightarrow 0}\frac{f(k,t)}{\left\vert
t\right\vert ^{p-1}}=0$ uniformly for all $k\in {\mathbb{Z}}$;

\item[$(H_{4})$] $\displaystyle\lim\limits_{\left\vert t\right\vert
\rightarrow +\infty }\frac{f(k,t)t}{\left\vert t\right\vert ^{p}}=+\infty $
for all $k\in 
\mathbb{Z}
;$

\item[$(H_{5})$] $\displaystyle$there exists $\sigma \geq 1$ such that $%
\sigma \mathcal{F}(k,t)\geq \mathcal{F}(k,st)$ for $k\in 
\mathbb{Z}
,t\in 
\mathbb{R}
,$ and $s\in \lbrack 0,1],$
\end{itemize}

where $F(k,t)$ is the primitive function of $f(k,t)$, that is $%
F(k,t)=\int_{0}^{t}f(k,s)ds$ for every $t\in 
\mathbb{R}
$ and $k\in 
\mathbb{Z}
,$ and $\mathcal{F}(k,t)=f(k,t)t-pF(k,t)$.

\bigskip Kong \cite{K} gave conditions for existence of a sequence of
solutions of the problem (\ref{eq}). In additions to hypotheses $%
(B),(H_{1}),(H_{5})$, he offered also the following conditions:

\begin{itemize}
\item[$(H_{2}^{\prime })$] $\displaystyle$there exist $d>0$ and $q>p$ such
that $~\left\vert F(k,t)\right\vert \leq d\left\vert t\right\vert ^{q}$ for
all $k\in 
\mathbb{Z}
$ and $t\in 
\mathbb{R}
;$

\item[$(H_{3}^{\prime })$] $\displaystyle\sup_{\left\vert t\right\vert \leq
T}\left\vert F(\cdot .t)\right\vert \in l_{1}$ for all $T>0$;

\item[$(H_{4}^{\prime })$] $\displaystyle\lim\limits_{\left\vert
t\right\vert \rightarrow +\infty }\frac{f(k,t)t}{\left\vert t\right\vert ^{p}%
}=+\infty $ uniformly for all $k\in 
\mathbb{Z}
.$
\end{itemize}

\bigskip Obviously, $(H_{4}^{\prime })$ is stronger than $(H_{4})$, and $%
(H_{2}^{\prime })$\ is stronger than both $(H_{2})$ and $(H_{3}).$\ In \cite%
{K}, as an example of function, which satisfied conditions $%
(H_{1}),(H_{2}^{\prime }),(H_{3}^{\prime }),(H_{4}^{\prime }),(H_{5})$ is
given the function 
\begin{equation}
f(k,t)=\frac{1}{k^{\mu }}\left\vert t\right\vert ^{p-2}t\ln \left(
1+\left\vert t\right\vert ^{\nu }\right) ,\ \ \ \ \ (k,t)\in 
\mathbb{Z}
\times 
\mathbb{R}
\label{function}
\end{equation}%
with $\mu >1$ and $\nu \geq 1$. But this does not satisfy the condition $%
(H_{4}^{\prime })$. Moreover, the conditions $(H_{3}^{\prime })$ and $%
(H_{4}^{\prime })$ are contradictory. Indeed, since $p>1$ the hypothesis $%
(H_{4}^{\prime })$ does give us $T_{1}>0$ such that $\left\vert
f(k,t)\right\vert \geq 1$ for all $\left\vert t\right\vert \geq T_{1}$ and $%
k\in 
\mathbb{Z}
$. Put $\alpha _{k}=F(k,T_{1})$ for all$\ k\in 
\mathbb{Z}
$. Then $\{\alpha _{k}\}\in l_{1},$ by $(H_{3}^{\prime })$. As $f$ is
continuous we have for $T>T_{1}$ and $k\in 
\mathbb{Z}
$%
\begin{eqnarray*}
\left\vert F(k,T)\right\vert &=&\left\vert \int_{0}^{T}f(k,t)dt\right\vert
=\left\vert \int_{0}^{T_{1}}f(k,t)dt+\int_{T_{1}}^{T}f(k,t)dt\right\vert
=\left\vert \alpha _{k}+\int_{T_{1}}^{T}f(k,t)dt\right\vert \\
&\geq &\left\vert \int_{T_{1}}^{T}f(k,t)dt\right\vert -\left\vert \alpha
_{k}\right\vert =\int_{T_{1}}^{T}\left\vert f(k,t)\right\vert dt-\left\vert
\alpha _{k}\right\vert \geq (T-T_{1})-\left\vert \alpha _{k}\right\vert ,
\end{eqnarray*}%
and so $\left\vert F(\cdot ,T)\right\vert \notin l_{1}$, contrary to $%
(H_{3}^{\prime })$.

\ It is easy to verify that the function (\ref{function}) does satisfy
conditions $(H_{1})-(H_{5}).$\ Note also that it does not satisfy the
standard Ambrosetti-Rabinowitz condition.

\section{\protect\bigskip Preliminaries}

We repeat the relevant for us material from \cite{K}. We begin by defining
some Banach spaces. For all $1\leq p<+\infty $, we denote $\ell ^{p}$ the
set of all functions $u:{\mathbb{Z}}\rightarrow {\mathbb{R}}$ such that 
\begin{equation*}
\Vert u\Vert _{p}^{p}=\sum_{k\in {\mathbb{Z}}}|u(k)|^{p}<+\infty .
\end{equation*}%
Moreover, we denote $\ell ^{\infty }$ the set of all functions $u:{\mathbb{Z}%
}\rightarrow {\mathbb{R}}$ such that 
\begin{equation*}
\Vert u\Vert _{\infty }=\sup_{k\in {\mathbb{Z}}}|u(k)|<+\infty
\end{equation*}

\bigskip We set 
\begin{equation}
X=\left\{ u:{\mathbb{Z}}\rightarrow {\mathbb{R}}\ :\ \ \sum_{k\in {\mathbb{Z}%
}}\left[ a(k)\left\vert \Delta u(k-1)\right\vert ^{p}+b(k)|u(k)|^{p}\right]
<\infty \right\}  \label{X}
\end{equation}%
and 
\begin{equation*}
\Vert u\Vert =\left( \sum_{k\in {\mathbb{Z}}}\left[ a(k)\left\vert \Delta
u(k-1)\right\vert ^{p}+b(k)|u(k)|^{p}\right] \right) ^{\frac{1}{p}}.
\end{equation*}%
Clearly we have 
\begin{equation}
\Vert u\Vert _{\infty }\leq \Vert u\Vert _{p}\leq b_{0}^{-\frac{1}{p}}\Vert
u\Vert \ \mbox{for all $u\in X$.}  \label{a}
\end{equation}

\bigskip Moreover, $(X,\Vert \cdot \Vert )$ is a reflexive and separable
Banach space and the embedding $X\hookrightarrow l^{p}$ is compact (see
Lemma 2.2 in \cite{K}).

\begin{lemma}
\label{ogon}\bigskip If $S$ is a compact subset of $l^{p}$, then, for every $%
\delta >0$, there exists $h>0$ such that 
\begin{equation*}
\left\vert u(k)\right\vert \leq \left( \sum_{\left\vert k\right\vert
>h}\left\vert u(k)\right\vert ^{p}\right) ^{\frac{1}{p}}<\delta
\end{equation*}%
for all $u\in S$ and $\left\vert k\right\vert >h.$
\end{lemma}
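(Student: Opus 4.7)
The plan is to exploit the fact that a compact subset of a metric space is totally bounded, so $S$ can be approximated arbitrarily well by finitely many fixed elements, each of which has $\ell^p$-summable tails.

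First I would fix $\delta>0$ and, using total boundedness of $S$ in $(\ell^p,\|\cdot\|_p)$, choose a finite $\tfrac{\delta}{2}$-net $u_1,\dots,u_n\in S$ so that every $u\in S$ satisfies $\|u-u_i\|_p<\tfrac{\delta}{2}$ for some $i\in\{1,\dots,n\}$. Next, since each $u_i\in\ell^p$, the tails of its $p$-th power series converge to $0$, so I can pick $h_i>0$ with $\bigl(\sum_{|k|>h_i}|u_i(k)|^p\bigr)^{1/p}<\tfrac{\delta}{2}$, and then set $h:=\max_{1\le i\le n}h_i$.

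The key step is then the triangle inequality applied to the seminorm $v\mapsto \bigl(\sum_{|k|>h}|v(k)|^p\bigr)^{1/p}$ on $\ell^p$: for an arbitrary $u\in S$, choosing $i$ with $\|u-u_i\|_p<\tfrac{\delta}{2}$, I get
\begin{equation*}
\left(\sum_{|k|>h}|u(k)|^p\right)^{1/p}\le \left(\sum_{|k|>h}|u(k)-u_i(k)|^p\right)^{1/p}+\left(\sum_{|k|>h}|u_i(k)|^p\right)^{1/p}<\frac{\delta}{2}+\frac{\delta}{2}=\delta.
\end{equation*}
Finally, for any $|k|>h$ the pointwise bound $|u(k)|^p\le \sum_{|k|>h}|u(k)|^p$ is immediate, giving the first inequality in the statement.

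I do not expect a genuine obstacle here; the only thing to be careful about is that the net $\{u_i\}$ is taken inside $S$ (which is fine, since $S$ is totally bounded in itself) so that the tail estimates on the $u_i$ are legitimate, and that the seminorm inequality is applied to the correct index set $\{|k|>h\}$ rather than to the full $\ell^p$ norm. The statement is essentially the classical characterization of precompactness in $\ell^p$ (uniformly small tails), restricted to the easy implication.
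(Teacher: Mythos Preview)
Your argument is correct: total boundedness gives a finite $\tfrac{\delta}{2}$-net, each net element has vanishing $\ell^p$-tails, and the triangle inequality for the tail seminorm finishes the job. The paper itself does not supply a proof of this lemma at all; it simply records that the statement is Lemma~3.3 in \cite{K} and moves on, so there is no in-paper argument to compare against. Your proof is the standard one and is entirely adequate here.
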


\bigskip This is Lemma 3.3 in \cite{K}.

Let 
\begin{equation*}
\Phi (u):=\frac{1}{p}\sum_{k\in 
\mathbb{Z}
}\left[ a(k)\left\vert \Delta u(k-1)\right\vert ^{p}+b(k)\left\vert
u(k)\right\vert ^{p}\right] \ \ \ \text{for all \ \ \ }u\in X
\end{equation*}%
and 
\begin{equation*}
\Psi (u):=\sum_{k\in 
\mathbb{Z}
}F(k,u(k))\text{ \ \ for all \ \ }u\in l^{p}
\end{equation*}%
where $F(k,s)=\int_{0}^{s}f(k,t)dt$ for $s\in 
\mathbb{R}
$ and $k\in 
\mathbb{Z}
$.\ Let $J:X\rightarrow \mathbb{R}$ be the functional associated to problem (%
\ref{eq}) defined by 
\begin{equation*}
J_{\lambda }(u)=\Phi (u)-\lambda \Psi (u).
\end{equation*}

\begin{lemma}
\label{propIT}\bigskip Assume that $(B)$ and $(H_{3})$ are satisfied. Then

\begin{itemize}
\item[$(a)$] $\Phi \in C^{1}(X);$

\item[$(b)$] $\Psi \in C^{1}(l^{p})$ $\ $and \ $\Psi \in C^{1}(X)$;

\item[$(c)$] $J_{\lambda }\in C^{1}(X)$ and every critical point $u\in X$ of 
$J_{\lambda }$ is a homoclinic solution of problem (\ref{eq}).
\end{itemize}
\end{lemma}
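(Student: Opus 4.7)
The plan is to verify the three parts of Lemma \ref{propIT} by direct variational computations, following the standard template used for $p$-Laplacian type functionals in \cite{K}.

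For \textbf{(a)}, I would compute the Gateaux derivative of $\Phi$ term-by-term to obtain
$$\Phi'(u)(v)=\sum_{k\in\mathbb{Z}}\bigl[a(k)\phi_p(\Delta u(k-1))\Delta v(k-1)+b(k)\phi_p(u(k))v(k)\bigr].$$
Convergence of the series and the bound $|\Phi'(u)(v)|\leq\|u\|^{p-1}\|v\|$ follow from H\"older's inequality with conjugate exponents $p$ and $p/(p-1)$ applied separately to the two sums. Continuity of $\Phi'\colon X\to X^{*}$ then reduces, via dominated convergence, to the pointwise continuity of $\phi_p$: if $u_{n}\to u$ in $X$, the weighted sequences $a^{1/p}\Delta(u_n-u)$ and $b^{1/p}(u_n-u)$ tend to zero in $l^p$, forcing their $\phi_p$-images to converge in the dual $l^{p/(p-1)}$, hence $\Phi'(u_n)\to\Phi'(u)$ in $X^{*}$.

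For \textbf{(b)}, given $u\in l^p$, the uniformity in $k$ of $(H_3)$ provides, for every $\epsilon>0$, some $\delta>0$ with $|f(k,t)|\leq\epsilon|t|^{p-1}$ for all $k\in\mathbb{Z}$ and $|t|\leq\delta$; integrating gives also $|F(k,t)|\leq(\epsilon/p)|t|^{p}$ on the same set. Since $u(k)\to 0$, only finitely many indices fall outside this range, so both $\Psi(u)=\sum_k F(k,u(k))$ and the candidate derivative $\Psi'(u)(v)=\sum_k f(k,u(k))v(k)$ are well-defined and satisfy a bound of the form $|\Psi'(u)(v)|\leq\epsilon\|u\|_p^{p-1}\|v\|_p+C(u)\|v\|_p$. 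To prove $\Psi'$ continuous on $l^p$, I would take $u_n\to u$ in $l^p$, apply Lemma \ref{ogon} to the precompact set $\{u_n\}\cup\{u\}$ to make the tail sums uniformly small, and exploit continuity of $f$ on the remaining finite block of indices. The transfer to $\Psi\in C^1(X)$ is then immediate from the continuous embedding in (\ref{a}) composed with the chain rule.

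Finally, \textbf{(c)} combines (a) and (b) into $J_\lambda=\Phi-\lambda\Psi\in C^1(X)$. A critical point satisfies $J_\lambda'(u)(v)=0$ for every $v\in X$; testing with the Kronecker sequence concentrated at an arbitrary $k_0$ (which lies in $X$) and performing discrete summation by parts converts the term $a(k)\phi_p(\Delta u(k-1))\Delta v(k-1)$ into $-\Delta(a(k_0)\phi_p(\Delta u(k_0-1)))$, recovering (\ref{eq}) pointwise. Since $u\in X\subset l^p$ forces $u(k)\to 0$ as $|k|\to\infty$, such a solution is automatically homoclinic. The main technical obstacle is the continuity of $\Psi'$ on $l^p$ in (b): without a uniform growth estimate on $f$ beyond $(H_3)$, one must genuinely combine the uniform-in-$k$ smallness near zero with the uniform $l^p$-tail decay from Lemma \ref{ogon}, rather than relying on a single dominating envelope.
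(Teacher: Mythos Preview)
Your proposal is correct and matches the argument the paper points to: the paper does not actually prove this lemma in-text but simply refers to Propositions~5--7 of \cite{IT} (and Lemma~2.3 of \cite{K}), whose content is precisely the computation you sketch, adapted here to the weight $a(k)$ and the associated norm on $X$. In particular your use of $(H_3)$ together with the tail estimate of Lemma~\ref{ogon} for part~(b), and the testing against Kronecker sequences plus summation by parts for part~(c), are exactly the ingredients of the cited proofs.
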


This version of the lemma can be proved essentially by the same way as
Propositions 5,6 and 7 in \cite{IT}, where $a(k)\equiv 1$ on $%
\mathbb{Z}
$ and the norm on $X$ is slightly different. See also Lemma 2.3 in \cite{K}.

\bigskip

\section{\protect\bigskip \protect\bigskip Main results}

Now we are ready to state our result.

\begin{theorem}
\label{main}Suppose that the conditions $(B),(H_{1})-(H_{5})$\ hold. Then,
for any $\lambda >0,$ the problem (\ref{eq}) has a sequence $\{u_{n}(k)\}$
of solutions such that $J_{\lambda }(u_{n})\rightarrow \infty $ as $%
n\rightarrow \infty $.
\end{theorem}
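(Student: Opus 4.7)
The plan is to apply a variant of the fountain theorem that uses the Cerami compactness condition (Zou's form), since this is tailor-made for the Jeanjean-type monotonicity hypothesis $(H_5)$. The functional $J_\lambda$ belongs to $C^1(X)$ by Lemma \ref{propIT} and is even by $(H_1)$. I would work with the natural Schauder basis $\{e_j\}_{j\in\mathbb{N}}$ of $X$ given by the Kronecker-delta functions at the integers (after any fixed enumeration of $\mathbb{Z}$), and set $Y_k = \mathrm{span}\{e_1,\ldots,e_k\}$ and $Z_k = \overline{\mathrm{span}\{e_j : j > k\}}$. Define $\beta_k = \sup\{\|u\|_p : u\in Z_k,\ \|u\|=1\}$; the standard argument combining the compact embedding $X\hookrightarrow \ell^p$ with the fact that any bounded sequence in $Z_k$ tends weakly to $0$ as $k\to\infty$ forces $\beta_k\to 0$.

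Verifying the geometric hypotheses is fairly routine. From $(H_2)$ and $(H_3)$ I would extract the uniform bound $|F(k,t)| \leq \varepsilon|t|^p + C_\varepsilon|t|^q$, valid for every $\varepsilon>0$; together with the inequalities $\|u\|_q\leq\|u\|_p\leq \beta_k\|u\|$ on $Z_k$ and a choice $r_k \sim \beta_k^{-q/(q-p)}\to\infty$, this yields $\inf_{u\in Z_k,\,\|u\|=r_k} J_\lambda(u)\to\infty$. For the opposite condition, L'H\^opital's rule applied to $(H_4)$ gives the pointwise superlinearity $F(k,t)/|t|^p\to\infty$ as $|t|\to\infty$, and since the elements of the finite-dimensional space $Y_k$ are finitely supported and all norms on $Y_k$ are equivalent, this forces $J_\lambda(u)\to -\infty$ along each ray in $Y_k$; consequently some $\rho_k>r_k$ satisfies $\max_{u\in Y_k,\,\|u\|=\rho_k}J_\lambda(u)\leq 0$.

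The main obstacle is the Cerami condition, specifically the boundedness of Cerami sequences $\{u_n\}$. Taking $s=0$ in $(H_5)$ gives $\mathcal{F}\geq 0$, so the identity $pJ_\lambda(u_n)-\langle J'_\lambda(u_n),u_n\rangle=\lambda\sum_k\mathcal{F}(k,u_n(k))$ shows $\sum_k\mathcal{F}(k,u_n(k))=O(1)$. Suppose for contradiction that $\|u_n\|\to\infty$, set $v_n=u_n/\|u_n\|$, and pass to a subsequence with $v_n\rightharpoonup v$ in $X$ and $v_n\to v$ in $\ell^p$. If $v=0$, introduce $t_n\in[0,1]$ maximising $t\mapsto J_\lambda(tu_n)$; for any $M>0$ the element $w_n=(2pM)^{1/p}v_n$ equals $c_nu_n$ with $c_n\in(0,1)$ for large $n$ and converges to $0$ in $\ell^p$, so continuity of $\Psi$ on $\ell^p$ gives $J_\lambda(t_nu_n)\geq J_\lambda(w_n)\to 2M$, while the interior-maximum condition combined with $(H_5)$ yields $pJ_\lambda(t_nu_n)\leq \sigma\lambda\sum_k\mathcal{F}(k,u_n(k))=O(1)$, contradicting large $M$. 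If $v\neq 0$, pick $k_0$ with $v(k_0)\neq 0$, so $|u_n(k_0)|\to\infty$; the identity $\bigl(F(k_0,t)/t^p\bigr)'=\mathcal{F}(k_0,t)/t^{p+1}$ combined with $F(k_0,t)/|t|^p\to\infty$ forces $\mathcal{F}(k_0,\cdot)$ to be unbounded, so for every $M$ there is $R_M$ with $\mathcal{F}(k_0,R_M)>\sigma M$; applying $(H_5)$ with $s=R_M/u_n(k_0)\in(0,1)$ then yields $\mathcal{F}(k_0,u_n(k_0))>M$, contradicting the boundedness of the sum.

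Once boundedness is secured, passage to a convergent subsequence is standard: the compact embedding $X\hookrightarrow\ell^p$ produces a pointwise and $\ell^p$-convergent subsequence, continuity of $\Psi'$ on $\ell^p$ handles the nonlinear term, and the $(S_+)$ property of the $p$-Laplacian-type operator $\Phi'$ on the reflexive Banach space $X$ upgrades weak to strong convergence in $X$. All the hypotheses of the Cerami-variant fountain theorem then hold, producing a sequence $\{u_n\}\subset X$ of critical points with $J_\lambda(u_n)\to\infty$; by Lemma \ref{propIT}(c), each such $u_n$ is a homoclinic solution of (\ref{eq}).
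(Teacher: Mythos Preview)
Your proposal is correct and follows the same overall architecture as the paper: apply the Cerami-variant fountain theorem to the even $C^1$ functional $J_\lambda$, verify the two geometric conditions on $Y_n$ and $Z_n$, and establish the Cerami condition via the Jeanjean rescaling trick built on $(H_5)$.

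The differences are in the implementation. You work with the concrete Kronecker-delta basis, so every $u\in Y_k$ is finitely supported; this lets you verify condition~(ii) directly from the pointwise superlinearity $F(k,t)/|t|^p\to\infty$ (obtained from $(H_4)$ via L'H\^opital) without invoking Lemma~\ref{ogon} or Lemma~\ref{pos}, whereas the paper keeps the basis abstract and therefore needs Lemma~\ref{ogon} to localise where $\|u\|_\infty$ is attained and Lemma~\ref{pos} to discard the remaining terms of $\sum_k F(k,u(k))$. For condition~(i) you use the $\ell^p$--$\ell^q$ inclusion $\|u\|_q\le\|u\|_p$ together with a single sequence $\beta_k=\beta_{p,k}$, while the paper tracks $\beta_{p,n}$ and $\beta_{q,n}$ separately; both routes give $r_n\to\infty$. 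Finally, the paper outsources the Cerami proof to \cite{K} (only indicating that $(H_4')$ may be replaced by $(H_4)$ plus Lemma~\ref{pos}), whereas you spell it out in full; your treatment of the case $v\neq 0$, pushing the unboundedness of $\mathcal F(k_0,\cdot)$ through $(H_5)$ to force $\mathcal F(k_0,u_n(k_0))\to\infty$, is a legitimate alternative to the more common Fatou-type argument based on $F(k,u_n(k))/\|u_n\|^p$. One small point to make explicit in your write-up of~(ii): the passage from ``$J_\lambda\to-\infty$ along each ray in $Y_k$'' to the existence of a single radius $\rho_k$ with $\max_{\|u\|=\rho_k}J_\lambda(u)\le 0$ requires a compactness argument on the unit sphere of $Y_k$ (using that $F(k,\cdot)$ is bounded below by $(H_4)$), which you should state rather than leave implicit.
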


\bigskip

Our main tool is the following version of the fountain theorem with Cerami's
condition (see \cite{L}). We say that $I$, a $C^{1}$-functional defined on a
Banach space $X$, satisfies the Cerami condition if any sequence $%
\{u_{n}\}\subset X$ such that $\{I(u_{n})\}$ is bounded and $(1+\left\Vert
u_{n}\right\Vert )\left\Vert I^{\prime }(u_{n})\right\Vert \rightarrow 0$
has a convergent subsequence; such a sequence is then called a Cerami
sequence. Now, let $X$ be a reflexive and separable Banach space. It is well
known that there exists $e_{i}\in X$ and $e_{i}^{\ast }\in X^{\ast }$ such
that 
\begin{equation*}
X=\overline{\text{span}\{e_{i}:i\in 
\mathbb{N}
\}},\ \ \ \ X^{\ast }=\overline{\text{span}\{e_{i}^{\ast }:i\in 
\mathbb{N}
\}}^{w^{\ast }}
\end{equation*}%
and%
\begin{equation*}
\left\langle e_{i}^{\ast },e_{j}\right\rangle =\delta _{ij}\text{, where }%
\delta _{ij}=1\text{ for }i=j\text{ and }\delta _{ij}=0\text{ for }i\neq j.
\end{equation*}%
Put%
\begin{equation}
X_{i}=\text{span}\{e_{i}\},\ \ \ \ Y_{n}=\oplus _{i=1}^{n}X_{i}\ \ \ \text{%
and}\ \ \ Z_{n}=\overline{\oplus _{i=n}^{\infty }X_{i}}.  \label{YZ}
\end{equation}

\begin{theorem}
\label{fountain}\bigskip Assume that $I\in C^{1}(X,%
\mathbb{R}
)$ satisfies the Cerami condition and $I(-u)=I(u)$. If for almost every $%
n\in 
\mathbb{N}
$, there exist $\rho _{n}>r_{n}>0$ such that

\begin{itemize}
\item[$(i)$] $a_{n}=\inf_{u\in Z_{n},\left\Vert u\right\Vert
=r_{n}}I(u)\rightarrow +\infty $ as $n\rightarrow \infty ;$

\item[$(ii)$] $b_{n}=\max_{u\in Y_{n},\left\Vert u\right\Vert =\rho
_{n}}I(u)\leq 0$,

then $I$ has a sequence of critical points $\{u_{n}\}$ such that $%
I(u_{n})\rightarrow +\infty $.
\end{itemize}
\end{theorem}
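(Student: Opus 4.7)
The plan is to establish the theorem by the standard minimax machinery adapted to Cerami sequences, following Bartsch's original approach to the fountain theorem. For each $n$, set $B_{n}=\{u\in Y_{n}:\Vert u\Vert \leq \rho _{n}\}$ and introduce the class of odd continuous extensions of the identity,
\[
\Gamma _{n}=\{\gamma \in C(B_{n},X):\gamma \text{ is odd},\ \gamma |_{\partial B_{n}}=\mathrm{id}\},
\]
together with the minimax values $c_{n}=\inf_{\gamma \in \Gamma _{n}}\max_{u\in B_{n}}I(\gamma (u))$. The goal is to verify $a_{n}\leq c_{n}<+\infty $ and that each $c_{n}$ is a critical value of $I$.

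For the lower bound $c_{n}\geq a_{n}$, I would use a standard intersection (linking) argument based on Krasnoselskii genus or the Borsuk--Ulam theorem: since $Y_{n}$ has dimension $n$ while $Z_{n}$ has codimension less than $n$, every odd $\gamma \in \Gamma _{n}$ satisfies $\gamma (B_{n})\cap \{u\in Z_{n}:\Vert u\Vert =r_{n}\}\neq \emptyset $. At any point of this intersection, $I\geq a_{n}$ by $(i)$, whence $c_{n}\geq a_{n}\rightarrow +\infty $. The upper bound $c_{n}<+\infty $ is immediate from $\gamma =\mathrm{id}\in \Gamma _{n}$ and continuity of $I$ on the compact set $B_{n}\subset Y_{n}$.

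The main technical obstacle is showing that each $c_{n}$ is a critical value. Hypothesis $(ii)$ is essential here: for $n$ large one has $a_{n}>0\geq b_{n}$, so the maximum of $I\circ \gamma $ is never attained on $\partial B_{n}$, leaving room to deform in the interior. Assume for contradiction that $c_{n}$ is regular. Since $I$ is even and satisfies the Cerami condition, I would invoke the equivariant Cerami deformation lemma (in the spirit of Bartolo--Benci--Fortunato): there exist $\varepsilon \in (0,c_{n}/2)$ and an odd continuous homotopy $\eta :[0,1]\times X\rightarrow X$ with $\eta (0,\cdot )=\mathrm{id}$, $\eta (t,u)=u$ whenever $I(u)\leq c_{n}-2\varepsilon $, and $\eta (1,\{I\leq c_{n}+\varepsilon \})\subset \{I\leq c_{n}-\varepsilon \}$. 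The subtlety distinguishing the Cerami case from the Palais--Smale one is that the underlying pseudo-gradient vector field must be weighted by a factor $1+\Vert u\Vert $, so that its trajectories have global existence in time despite the weaker decay $(1+\Vert u\Vert )^{-1}\Vert I^{\prime }(u)\Vert \rightarrow 0$ along Cerami sequences. Picking a near-optimal $\gamma _{0}\in \Gamma _{n}$ with $\max_{B_{n}}I\circ \gamma _{0}\leq c_{n}+\varepsilon $, the composition $\eta (1,\cdot )\circ \gamma _{0}$ remains in $\Gamma _{n}$, because it is odd and agrees with the identity on $\partial B_{n}$ (where $I\leq 0\leq c_{n}-2\varepsilon $ for large $n$), while its maximum of $I$ is at most $c_{n}-\varepsilon $, contradicting the definition of $c_{n}$. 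Hence $c_{n}$ is critical, and choosing a corresponding critical point $u_{n}$ for each sufficiently large $n$ yields the required sequence with $I(u_{n})=c_{n}\geq a_{n}\rightarrow +\infty $.
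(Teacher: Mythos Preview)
Your outline is essentially the standard proof of the fountain theorem under the Cerami condition, and the main steps (the intersection lemma via Borsuk--Ulam giving $c_n\geq a_n$, the equivariant Cerami-type deformation lemma, and the contradiction via composition with a near-optimal $\gamma_0$) are correctly identified. One small inaccuracy: you write that $Z_n$ has codimension less than $n$; more precisely, $\mathrm{codim}\,Z_n=n-1$ and $\dim Y_n=n$, which is exactly what the intersection argument needs.

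However, you should be aware that the paper does \emph{not} supply its own proof of this theorem. It is quoted as a tool from \cite{L} (S.~Liu, \textit{Nonlinear Anal.}~\textbf{73} (2010), 788--795) and used as a black box in the proof of Theorem~\ref{main}. So there is no ``paper's proof'' to compare against; your sketch simply reproduces the argument that underlies the cited reference, which in turn follows Bartsch's original fountain theorem with the Palais--Smale deformation replaced by its Cerami analogue.
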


\bigskip In the remainder of this paper, let $X$ be defined by (\ref{X}),
and $Y_{n}$ and $Z_{n}$ be given in (\ref{YZ}). To prove Theorem 1, we will
also need the following.

\begin{lemma}
\label{beta}\bigskip Let $q\geq p$. For $n\in 
\mathbb{N}
$, define 
\begin{equation*}
\beta _{q,n}=\sup_{u\in Z_{n},\left\Vert u\right\Vert =1}\left\Vert
u\right\Vert _{q}.
\end{equation*}%
Then, $\lim_{n\rightarrow \infty }\beta _{q,n}=0$.
\end{lemma}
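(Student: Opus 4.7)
My plan is first to observe that $\beta_{q,n}$ is nonincreasing in $n$ (since $Z_{n+1}\subseteq Z_n$) and bounded below by $0$, so it converges to some $\beta\geq 0$, and the task reduces to showing $\beta=0$. To this end, for each $n$ I would select a near-maximizer $u_n\in Z_n$ with $\Vert u_n\Vert=1$ and $\Vert u_n\Vert_q\geq \beta_{q,n}-1/n$, and then argue that $u_n\to 0$ strongly in $\ell^q$.

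Since $X$ is reflexive and the sequence $(u_n)$ is bounded, I would pass to a subsequence with $u_n\rightharpoonup u$ weakly in $X$. The crucial identification step is that $u=0$. For each fixed $j$ and every $n>j$, the element $u_n$ belongs to $Z_n$, which is the closed span of $\{e_i:i\geq n\}$, and each such $e_i$ is annihilated by $e_j^{\ast}$; by continuity we get $\langle e_j^{\ast},u_n\rangle =0$ for $n>j$, whence passing to the weak limit gives $\langle e_j^{\ast},u\rangle =0$ for every $j\in\mathbb{N}$. Since $X^{\ast}=\overline{\mathrm{span}\{e_j^{\ast}:j\in\mathbb{N}\}}^{w^{\ast}}$, weak-$\ast$ continuity extends this annihilation to all of $X^{\ast}$, forcing $u=0$.

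With $u_n\rightharpoonup 0$ in $X$ established, I would invoke the compact embedding $X\hookrightarrow \ell^p$ (quoted from Lemma 2.2 of \cite{K}) to upgrade this to $u_n\to 0$ strongly in $\ell^p$. The case $q=p$ then finishes immediately. For $q>p$ I would use the elementary fact that $\Vert v\Vert_q\leq \Vert v\Vert_p$ for every $v\in\ell^p$ (a consequence of $\Vert v\Vert_\infty\leq \Vert v\Vert_p$ applied inside $|v(k)|^q=|v(k)|^{q-p}|v(k)|^p$ and summing), giving $\Vert u_n\Vert_q\leq \Vert u_n\Vert_p\to 0$, and hence $\beta\leq \lim_n (\Vert u_n\Vert_q + 1/n)=0$.

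The main obstacle, as I see it, is the deduction $u=0$ from $\langle e_j^{\ast},u\rangle =0$ for all $j$: this is the only place where the specific structure of the biorthogonal system in the definition of $Y_n$ and $Z_n$ truly enters, and it requires the $w^{\ast}$-density of $\mathrm{span}\{e_j^{\ast}\}$ in $X^{\ast}$. The remaining ingredients (monotonicity of $\beta_{q,n}$, weak compactness, the compact embedding $X\hookrightarrow \ell^p$, and the $\ell^p\hookrightarrow \ell^q$ estimate) are standard and require no technical work.
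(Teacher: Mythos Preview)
Your argument is correct and is precisely the standard proof of this lemma (going back to Willem's treatment of the fountain theorem): monotonicity of $\beta_{q,n}$, selection of near-maximizers, weak convergence to zero via the biorthogonal system, and upgrading to strong convergence in $\ell^p$ through the compact embedding $X\hookrightarrow\ell^p$. The paper does not give its own proof but simply cites Lemma~3.2 in \cite{K} for $q>p$ and remarks that the same argument covers $q=p$; your write-up is exactly that argument, with the $q=p$ case handled directly and the $q>p$ case reduced to it via the elementary inequality $\Vert v\Vert_q\leq\Vert v\Vert_p$. One small expositional point: after passing to a weakly convergent subsequence you should conclude $\beta=\lim_k\beta_{q,n_k}\leq\lim_k(\Vert u_{n_k}\Vert_q+1/n_k)=0$ along that subsequence, which suffices since $\beta_{q,n}$ already converges.
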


\bigskip For $q>p$ this is Lemma 3.2 in \cite{K}. As proof shows, the
instance $q=p$ is also true.

\begin{lemma}
\label{pos}\bigskip Suppose that $(H_{5})$ holds, then 
\begin{equation*}
F(k,t)\geq 0\ \ \ \text{and}\ \ \ f(k,t)t\geq 0
\end{equation*}%
for all $(k,t)\in 
\mathbb{Z}
\times 
\mathbb{R}
$.
\end{lemma}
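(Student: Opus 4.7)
The plan is to read off both inequalities from a single consequence of $(H_{5})$, obtained by specializing $s=0$. Since $F(k,0)=\int_{0}^{0}f(k,\tau)\,d\tau=0$, we have $\mathcal{F}(k,0)=f(k,0)\cdot 0-pF(k,0)=0$, so $(H_{5})$ with $s=0$ yields $\sigma\mathcal{F}(k,t)\geq\mathcal{F}(k,0)=0$. Because $\sigma\geq 1>0$, this forces
\begin{equation*}
\mathcal{F}(k,t)=f(k,t)t-pF(k,t)\geq 0\qquad\text{for all }(k,t)\in\mathbb{Z}\times\mathbb{R},
\end{equation*}
in particular $f(k,t)t\geq pF(k,t)$.

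To upgrade this to $F(k,t)\geq 0$, I would fix $(k,t)$ with $t\neq 0$ and study the auxiliary function $g(s):=F(k,st)/s^{p}$ on $(0,\infty)$. Direct differentiation combined with the previous step gives
\begin{equation*}
g'(s)=\frac{f(k,st)(st)-pF(k,st)}{s^{p+1}}=\frac{\mathcal{F}(k,st)}{s^{p+1}}\geq 0,
\end{equation*}
so $g$ is non-decreasing on $(0,\infty)$. Passing to the limit $s\to 0^{+}$ and using the continuity of $F(k,\cdot)$ at zero, one concludes $\lim_{s\to 0^{+}}g(s)\geq 0$; the monotonicity of $g$ then yields $F(k,t)=g(1)\geq 0$. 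Finally, $f(k,t)t\geq pF(k,t)\geq 0$ is immediate.

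The main obstacle is the passage to the limit $s\to 0^{+}$: although $F(k,st)\to 0$ by continuity, division by $s^{p}$ with $p>1$ could a priori drive $g(s)$ to $-\infty$, and ruling out such a negative limit is the delicate step of the argument.
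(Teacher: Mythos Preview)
Your argument mirrors the paper's almost exactly: both set $s=0$ in $(H_{5})$ to obtain $\mathcal{F}(k,t)\geq 0$, then differentiate $F(k,t)/t^{p}$ (the paper works in the variable $t$ directly, you in the rescaled variable $g(s)=F(k,st)/s^{p}$) to get monotonicity, and finally appeal to the behaviour at the origin. The paper simply writes ``Since $F(k,0)=0$, we obtain $F(k,t)\geq 0$'', glossing over precisely the limit you flag as the ``main obstacle''.

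Your caution is justified, and the gap is genuine: the passage to the limit cannot be completed from $(H_{5})$ alone, because the lemma as stated is false. Take $p=2$ and $f(k,t)=-t$. Then $F(k,t)=-t^{2}/2$ and $\mathcal{F}(k,t)=(-t)t-2(-t^{2}/2)=0$, so $(H_{5})$ holds trivially for any $\sigma\geq 1$, yet $F(k,1)=-\tfrac{1}{2}<0$ and $f(k,1)\cdot 1=-1<0$. More generally $f(k,t)=-|t|^{p-2}t$ furnishes a counterexample for every $p>1$. In this example your $g(s)\equiv -|t|^{p}/p$ is constant (hence nondecreasing) but strictly negative, which pinpoints exactly where the argument breaks. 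Thus neither your proof nor the paper's can be salvaged without an extra hypothesis.

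The natural fix is to add $(H_{3})$, which is in force everywhere the lemma is actually used. From $(H_{3})$ one has $|f(k,\tau)|\leq\varepsilon|\tau|^{p-1}$ for $|\tau|$ small, hence $|F(k,st)|\leq\varepsilon|st|^{p}/p$ and $g(s)\to 0$ as $s\to 0^{+}$. Monotonicity then gives $F(k,t)=g(1)\geq 0$, and $f(k,t)t\geq pF(k,t)\geq 0$ follows.
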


\begin{proof}
\bigskip By $(H_{5})$, 
\begin{equation*}
f(k,t)t-pF(k,t)\geq 0
\end{equation*}%
for all $(k,t)\in 
\mathbb{Z}
\times 
\mathbb{R}
$. From this, for $t>0$ and $k\in 
\mathbb{Z}
$, we have 
\begin{equation*}
\frac{\partial }{\partial t}\left( \frac{F(k,t)}{t^{p}}\right) =\frac{%
t^{p}f(k,t)-pt^{p-1}F(k,t)}{t^{2p}}\geq 0.
\end{equation*}%
Since $F(k,0)=0$, we obtain $F(k,t)\geq 0$ and $f(k,t)t\geq 0$ for all $k\in 
\mathbb{Z}
$ and $t\geq 0$. Arguing similarly for the case $t\leq 0$, we complete the
proof.
\end{proof}

\begin{lemma}
\label{cerami}\bigskip Assume that $(B)$ and $(H_{3}),(H_{4}),(H_{5})$ hold.
Then, for any $\lambda >0$, $J_{\lambda }$ satisfies Cerami's condition.
\end{lemma}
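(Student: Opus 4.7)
The plan is to establish Cerami's condition in two stages: first prove that any Cerami sequence $\{u_n\} \subset X$ is bounded, then use the compact embedding $X \hookrightarrow \ell^p$ together with the $(S_+)$-property of $\Phi'$ to upgrade a weakly convergent subsequence to a strongly convergent one. The driving identity throughout will be
\[
p J_\lambda(u_n) - \langle J_\lambda'(u_n), u_n\rangle = \lambda \sum_{k \in \mathbb{Z}} \mathcal{F}(k, u_n(k)).
\]
Combined with the Cerami hypothesis this forces the right-hand sum to stay bounded, and Lemma \ref{pos} (which uses $(H_5)$ at $s = 0$) shows each summand is nonnegative.

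For boundedness I argue by contradiction: assume $\|u_n\| \to \infty$ and set $v_n := u_n/\|u_n\|$. Passing to a subsequence, $v_n \rightharpoonup v$ in $X$ and $v_n \to v$ in $\ell^p$. If $v \neq 0$, then on $\{k : v(k) \neq 0\}$ one has $|u_n(k)| \to \infty$, and integrating $(H_4)$ yields $F(k, u_n(k))/|u_n(k)|^p \to \infty$, hence $F(k, u_n(k))/\|u_n\|^p \to \infty$ pointwise on this set. Inserting this into
\[
\frac{J_\lambda(u_n)}{\|u_n\|^p} = \frac{1}{p} - \lambda \sum_{k} \frac{F(k, u_n(k))}{\|u_n\|^p}
\]
and applying Fatou's lemma together with $F \geq 0$ (Lemma \ref{pos}) forces $J_\lambda(u_n) \to -\infty$, contradicting boundedness of $J_\lambda(u_n)$.

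The subcase $v = 0$ is the main obstacle and will be handled by Jeanjean's maximization device: choose $t_n \in [0,1]$ with $J_\lambda(t_n u_n) = \max_{t \in [0,1]} J_\lambda(t u_n)$. For any fixed $R > 0$, once $\|u_n\| \geq R$ the vector $R v_n = (R/\|u_n\|) u_n$ is an admissible test. Since $\|R v_n\|_\infty \leq R\|v_n\|_p \to 0$ by (\ref{a}), assumption $(H_3)$ lets one dominate $F(k, R v_n(k))$ by $\varepsilon|R v_n(k)|^p/p$ uniformly in $k$ for $n$ large, so $\Psi(R v_n) \to 0$ and $J_\lambda(R v_n) \to R^p/p$; arbitrariness of $R$ yields $J_\lambda(t_n u_n) \to +\infty$. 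But $J_\lambda(u_n)$ is bounded and $J_\lambda(0) = 0$, so the maximum is interior: $t_n \in (0,1)$ and $\langle J_\lambda'(t_n u_n), t_n u_n\rangle = 0$. Applying the driving identity at $t_n u_n$ and invoking $(H_5)$ with $s = t_n$ gives
\[
p J_\lambda(t_n u_n) = \lambda \sum_k \mathcal{F}(k, t_n u_n(k)) \leq \lambda \sigma \sum_k \mathcal{F}(k, u_n(k)),
\]
which is bounded, the desired contradiction.

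With boundedness in hand, extract $u_n \rightharpoonup u$ in $X$ and $u_n \to u$ in $\ell^p$. The Cerami hypothesis gives $\langle J_\lambda'(u_n), u_n - u\rangle \to 0$, and weak convergence gives $\langle J_\lambda'(u), u_n - u\rangle \to 0$. Since $\Psi \in C^1(\ell^p)$ by Lemma \ref{propIT}(b), the bound $|\langle \Psi'(u_n) - \Psi'(u), u_n - u\rangle| \leq \|\Psi'(u_n) - \Psi'(u)\|_{(\ell^p)^*}\, \|u_n - u\|_p$ tends to zero. Consequently $\langle \Phi'(u_n) - \Phi'(u), u_n - u\rangle \to 0$, and the pointwise strict monotonicity of $\phi_p$ (summed against the weights $a(k), b(k)$) transports this to $\|u_n - u\| \to 0$, completing the proof.
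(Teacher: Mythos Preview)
Your argument is correct and follows the same Jeanjean-type strategy that the paper invokes by deferring to Lemma~3.4 of \cite{K}: boundedness via the dichotomy $v\neq 0$ (Fatou together with $(H_4)$ and $F\geq 0$ from Lemma~\ref{pos}) versus $v=0$ (the $\max_{t\in[0,1]}J_\lambda(tu_n)$ trick combined with $(H_3)$ and $(H_5)$), followed by the standard $(S_+)$-type passage from weak to strong convergence using the compact embedding and Lemma~\ref{propIT}(b). In effect you have written out in full what the paper only sketches by reference, so no discrepancy arises.
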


\begin{proof}
Let $\lambda >0$ be fixed. Let $\{u_{n}\}$ be a Cerami sequence of $%
J_{\lambda }$. Firstly, we assume that $\{u_{n}\}$ is bounded. Up to
considering a subsequence, we may assume that for some $c\in 
\mathbb{R}
$, $J_{\lambda }(u_{n})\rightarrow c$ and $J_{\lambda }^{\prime
}(u_{n})\rightarrow 0$. Then the proof proceeds along the same lines as the
first part of the proof of Lemma 3.4 in \cite{K}, giving the desired
conclusion. This proof uses assumptions $(B)$ and $(H_{3})$.

Now, let we suppose that $\{u_{n}\}$ is unbounded. Then, up to a
subsequence, we may assume that for some $c\in 
\mathbb{R}
$,%
\begin{equation*}
J_{\lambda }(u_{n})\rightarrow c,\ \ \ \left\Vert u_{n}\right\Vert
\rightarrow \infty ,\ \ \ \left\Vert u_{n}\right\Vert \left\Vert J_{\lambda
}^{\prime }(u_{n})\right\Vert \rightarrow 0.
\end{equation*}

\bigskip And again, proceeding as in the second part of the proof of Lemma
3.4 in \cite{K}, we obtain a contradiction. We must only use Lemma \ref{pos}
and assumption $(H_{4})$ instead of using condition $(H_{4}^{\prime })$.
\end{proof}

\bigskip

\begin{proof}[Proof of Theorem 1]
\bigskip Let $\lambda >0$ be fixed. By $(H_{1})$ and Lemma \ref{cerami}, $%
J_{\lambda }$ is even and satisfies Cerami's condition. In the following, we
show that, for almost every $n\in 
\mathbb{N}
$, there exist $\rho _{n}>r_{n}>0$ such that conditions $(i)$ and $(ii)$ of
Theorem \ref{fountain}\ with $I=J_{\lambda }$ are satisfied.

By Lemma \ref{ogon}, there exists $n_{0}\in 
\mathbb{N}
$ such that $\beta _{p,n}<\left( \frac{1}{2p\lambda d}\right) ^{\frac{1}{p}}$
for all $n\geq n_{0}$.\ Let $r_{n}=\left( \frac{\frac{1}{2p}-\lambda d\beta
_{p,n}^{p}}{\lambda d\beta _{q,n}^{q}}\right) ^{\frac{1}{q-p}}$ for all $%
n\geq n_{0}$. Here $d>0$ and $q>p$ are given in $(H_{2}).$ Then, one has $%
r_{n}>0$ and $\lim_{n\rightarrow \infty }r_{n}=\infty $. For $u\in Z_{n}$ we
have $\left\Vert u\right\Vert _{p}\leq \beta _{p,n}\left\Vert u\right\Vert $
and $\left\Vert u\right\Vert _{q}\leq \beta _{q,n}\left\Vert u\right\Vert $,
by Lemma \ref{beta}. Let $u\in Z_{n}$\ with $\left\Vert u\right\Vert =r_{n}$%
. Then 
\begin{eqnarray*}
J_{\lambda }(u) &=&\frac{1}{p}\sum_{k\in 
\mathbb{Z}
}\left[ a(k)\left\vert \Delta u(k-1)\right\vert ^{p}+b(k)\left\vert
u(k)\right\vert ^{p}\right] -\lambda \sum_{k\in 
\mathbb{Z}
}F(k,u(k))\geq \frac{1}{p}\left\Vert u\right\Vert ^{p}-\lambda d\left(
\left\Vert u\right\Vert _{p}^{p}+\left\Vert u\right\Vert _{q}^{q}\right) \\
&\geq &\frac{1}{p}\left\Vert u\right\Vert ^{p}-\lambda d\left( \beta
_{p,n}^{p}\left\Vert u\right\Vert ^{p}+\beta _{q,n}^{q}\left\Vert
u\right\Vert ^{q}\right) =\frac{1}{2p}\left( \frac{\frac{1}{2p}-\lambda
d\beta _{p,n}^{p}}{\lambda d\beta _{q,n}^{q}}\right) ^{\frac{p}{q-p}}=\frac{1%
}{2p}r_{n}^{p}.
\end{eqnarray*}%
Therefore%
\begin{equation*}
a_{n}=\inf_{u\in Z_{n},\left\Vert u\right\Vert =r_{n}}J_{\lambda
}(u)\rightarrow +\infty \ \ \ \ \ \ \text{as \ \ \ }n\rightarrow \infty
\end{equation*}%
and condition $(i)$ holds.

Since $\dim Y_{n}<\infty $, there exists $C_{n}>0$ such that 
\begin{equation}
\frac{1}{p}\left\Vert u\right\Vert ^{p}\leq \lambda C_{n}\left\Vert
u\right\Vert _{\infty }^{p}\text{ \ \ \ \ \ \ \ for all }u\in Y_{n}.
\label{nor}
\end{equation}%
As $S_{n}=\{u\in Y_{n}:\left\Vert u\right\Vert =1\}$ is compact in $l^{p}$,
there exists $h_{n}>0$ such that $\left\vert u(k)\right\vert <1$ for all $%
u\in S_{n}$ and $\left\vert k\right\vert >h_{n}$, by Lemma \ref{ogon}.
Consequently, for every $u\in Y_{n}$, there exists $k_{0}\in 
\mathbb{Z}
$ with $\left\vert k_{0}\right\vert \leq h_{n}$ such that $\left\Vert
u\right\Vert _{\infty }=\left\vert u(k_{0})\right\vert $. By $(H_{4})$,
there exists $T>0$ such that%
\begin{equation}
F(k,t)\geq 2C_{n}\left\vert t\right\vert ^{p}\text{ \ \ \ \ \ \ \ \ for }%
(k,t)\in 
\mathbb{Z}
\times 
\mathbb{R}
\text{ with }\left\vert k\right\vert \leq h_{n}\text{ and }\left\vert
t\right\vert >T.  \label{osz}
\end{equation}%
Choose $\rho _{n}>\max \{\left( \lambda pC_{n}\right) ^{1/p}T,r_{n}\}$ for
all $n\geq n_{0}$. Then $\rho _{n}>r_{n}>0$. Taking $u\in Y_{n}$ with $%
\left\Vert u\right\Vert =\rho _{n}$ we have $\left\Vert u\right\Vert
_{\infty }>T$ and $\left\Vert u\right\Vert _{\infty }=\left\vert
u(k_{0})\right\vert $\ for some $k_{0}\in 
\mathbb{Z}
$ with $\left\vert k_{0}\right\vert \leq h_{n}$. Thus, from (\ref{nor}),(\ref%
{osz}) and Lemma \ref{pos}, it follows that%
\begin{eqnarray*}
J_{\lambda }(u) &=&\frac{1}{p}\left\Vert u\right\Vert ^{p}-\lambda
\sum_{k\in 
\mathbb{Z}
}F(k,u(k))\leq \frac{1}{p}\left\Vert u\right\Vert ^{p}-\lambda
F(k_{0},u(k_{0}))\leq \frac{1}{p}\left\Vert u\right\Vert ^{p}-\lambda
2C_{n}\left\vert u(k_{0})\right\vert ^{p} \\
&=&\frac{1}{p}\left\Vert u\right\Vert ^{p}-\lambda 2C_{n}\left\Vert
u(k_{0})\right\Vert _{\infty }^{p}\leq -\frac{1}{p}\left\Vert u\right\Vert
^{p}.
\end{eqnarray*}%
Therefore 
\begin{equation*}
b_{n}=\max_{u\in Y_{n},\left\Vert u\right\Vert =\rho _{n}}J_{\lambda
}(u)\leq 0
\end{equation*}%
and condition $(ii)$ holds.
\end{proof}

\bigskip

\begin{tabular}{l}
Robert Stegli\'{n}ski \\ 
Institute of Mathematics, \\ 
Lodz University of Technology, \\ 
Wolczanska 215, 90-924 Lodz, Poland, \\ 
robert.steglinski@p.lodz.pl%
\end{tabular}


\begin{thebibliography}{9}
\bibitem{BMB} G. Bonanno, G. Molica Bisci, Infinitely many solutions for a
boundary value problem with discontinuous nonlinearities, \textit{Bound.
Value Probl}., \textbf{2009} (2009), 1--20.

\bibitem{IT} A. Iannizzotto, S. Tersian, Multiple homoclinic solutions for
the discrete $p-$Laplacian via critical point theory, \textit{J. Math. Anal.
Appl}. \textbf{403} (2013), 173--182.

\bibitem{K} L. Kong, Homoclinic solutions for a second order difference
equation with $p-$Laplacian, \textit{Appl. Math. Comput}., \textbf{247}
(2014), 1113--1121.

\bibitem{L} S. Liu, On superlinear problems without the Ambrosetti and
Rabinowitz condition, \textit{Nonlinear Anal.} \textbf{73 }(2010), 788--795.

\bibitem{R} B. Ricceri, A general variational principle and some of its
applications, \textit{J. Comput. Appl. Math.} \textbf{133} (2000), 401-410.

\bibitem{St} R. Stegli\'{n}ski, On sequences of large homoclinic solutions
for a difference equations on the integers, \textit{Adv. Difference Equ.},
(2016), 2016:38.

\bibitem{St1} R. Stegli\'{n}ski, On sequences of large homoclinic solutions
for a difference equations on the integers involving oscillatory
nonlinearities,  arXiv:1603.07104 .

\bibitem{SM} G. Sun, A. Mai, Infinitely many homoclinic solutions for second
order nonlinear difference equations with $p-$Laplacian, \textit{The
Scientific World Journal}, (2014).
\end{thebibliography}
\end{document}